\documentclass{article} 
\usepackage{cite}
\usepackage{amsmath, tikz-cd}
\usepackage{multicol}
\usepackage{subfigure}
\usepackage{amssymb}
\usepackage{enumitem}
\usepackage{amsthm}
\usepackage{hyperref} 

\usepackage{cite}

\theoremstyle{theorem}

\theoremstyle{definition}

\numberwithin{equation}{section}
\theoremstyle{plain}

\newtheorem*{Corollary*}{Corollary}
\newtheorem{Theorem}[equation]{Theorem}
\newtheorem*{Theorem*}{Theorem}

\theoremstyle{definition}
\newtheorem{Definition}[equation]{Definition}

\allowdisplaybreaks

\makeatletter
\@addtoreset{footnote}{page}
\makeatother

\begin{document}

\title{How to Bake a Composition Operator from Scratch}

\author{\\ Michael R Pilla \\           
\scriptsize Florida Polytechnic University\\    }                 

\maketitle

\begin{abstract}
At face value, composition operators look like something first experienced in algebra. It's just composition of functions after all. However, there is a deep and technical setting in which these objects live, behaving like infinite-dimensional matrices acting on strange yet fascinating spaces. We take a novel approach to introducing a dense mathematical topic by mirroring that of a so-called scratch kitchen, where ingredients are not prepackaged, processed, or frozen but rather are fresh. While some mathematical maturity will certainly help in the same way that baking experience helps with a new recipe, nothing in one's background is assumed. Along the way, we make no reservations about making historical or philosophical comments to spark reader's interest as well. Bon app\'etit!
\end{abstract}

\section{A WORD ON MOTIVATION}

\begin{quote}
 ``{\it If you wish to make an apple pie from scratch, you must first invent the universe.} - Carl Sagan
\end{quote} 

I once heard a story about a boy who wished to have great strength. He lived on a farm which raised bulls amongst other animals. One day he decided that he wanted to be strong enough to lift a bull. He decisively made his plan. The next time a bull was to be born on their lot, he would lift it every day. As the baby bull would grow, so would his strength. One day, it stood to reason, when the bull reaches full maturity, the boy would find himself capable of lifting the bull! It was not long until such a newborn arrived. The boy, eager to begin his journey to great strength, swiftly approached the recently delivered baby. However, as he went to complete his first day's task, he realized that the newborn was already too heavy for him to lift!

Even the most well-written expositions will fail in their execution if the reader cannot pick up the first definitions. Additionally, for the sake of brevity, an array of assumptions are made by authors of mathematical content about the backround knowledge of the intended readers. There is no sort of standard about these assumptions beyond the wildy varying beliefs of the authors who all too often just assume their diverse audience had the mathematical training and resources parallel to their own experience. A common retort is that the student `ought' to have had such and such a background. But alas, reality has a track record of failing to align with expectactions, a lesson we all would do well to better acknowledge. Of course, now we live in an era where any definitions we desire are at our fingertips. However as a literature professor once taught me, one ought not interrupt the flow of a good reading. There is value and consistency in self-containment. Additionally, when seeking out external information, the reader is left to her own discretion as to decide which resource to attend, not only breaking the flow of the reading, but also leaving the reader to burn energy trying to decipher explanations housed in an oft foreign context with a focus on hair-splitting correctness over pedagogy (I'm looking at you, Wikipedia). Good exposition and the fleshing out of details takes great work. And it's not just that the author is too lazy to include such information, but rather that our profession highly values terseness and honestly, punishes exposition, pressuring the author to exclude details that might otherwise hook a much larger audience into gaining interest in the topic. This is to the detriment of the mathematical community.

As an experiment of concept, we shall embark on a story about a mathematical object known as a composition operator. We will introduce the topic from first principles. Now, in reality, no one really bakes from scratch. The sugar, the vanilla extract, and so forth, have been, in fact, purchased from the store. Perhaps one grew the apples from a backyard apple tree or collected mint from a backyard herb garden. At some point, though, one either depends on basic store-bought ingredients or the miracle of nature. Creating the universe is probably beyond any mortal's scope. Hence, we will start with ingredients that are reasonably basic enough to be considered scratch, and proceed from there.

Before taking stock of our ingredients, let's chat a bit about what it is that we are baking. Every algebra student (hopefully!) learns about function composition. Given two functions $f$ and $g$ and all values $x$ such that $x$ is in the domain of $g$ and $g(x)$ is in the domain of $f$, we define $f$ composed with $g$ by $f \circ g \equiv f(g(x))$. While I find my students frequently fumble function composition questions involving tricky domains, there's usually a very quick understanding of the actual composition. In fact, composition is a particularly interesting feature of functions that is not shared with numbers. Just as with numbers, one can do arithmetic with functions via pointwise definitions. Thus, for example, one can meaningfully talk about addition $f+g$ by defining it pointwise as $(f+g)(x)=f(x)+g(x)$ for all $x$ in the domain of $f$ and $g$. However, function composition has no analog with numbers. You can't nestle a $2$ inside of a $7$! I know, I know. You're thinking, we haven't even defined a function yet. We're not baking from scratch! Well, we haven't started baking yet. Right now we are talking about what we are going to bake. Sounds delicious!

Thus, composition operators probably have something to do with function composition. However, there is much more to them. Like any mathematical object worth its weight in salt, composition operators are housed in a large well-defined mathematical structure, rich with content and beauty. Intuitively, they can be thought of as a specific type of infinite-dimensional matrix, acting on certain types of vectors space. Our focus will be one of the most important of these, the Hardy space. Now, before learning about function composition, one has to learn about the arithmetic of numbers and usually some analytic geometry. This is the arena where functions perform, the context in which function composition resides. The context we will introduce, however, will be more sophisticated. 

Hence, the starting point of our understanding of composition operators is an understanding of the Hardy space. I recall my first introduction to the space. True to terse mathematical form, within five sentences of the book we arrive at our definition upon which the entire book (and my thesis!) rests. It looked like this (which, at the present time of writing this, looks almost identical to Wikipedia's definition).

\begin{Definition}
Let $f$ be a function analytic in the unit disk $\mathbb{D}$ and $f_r(e^{i\theta})=f(re^{i\theta})$ where $0<r<1$. Let $0<p<\infty$. We define the {\bf Hardy space} $H^p(\mathbb{D})$ to be the set of functions such that 

\begin{equation}
\sup_{0<r<1} \int_0^{2\pi} |f_r(e^{i\theta})|^p \frac{d\theta}{2\pi}<\infty.
\end{equation}

\end{Definition}

No set up. No motivation. Just a definition freely floating in the void. To the informed, subconscious connections are made and perhaps it's description is obvious. For the uninitiated, without an intuition of function spaces, this is a bolt out of the blue.  And such bolts are certainly not restricted to graduate school. I recall the first day in my undergraduate topology course. The professor walked up to the whiteboard and wrote down the definition of a topology, then proceeded to start working out examples of what is and isn't a topology. Why {\it that} definition? What motivated it? I am firmly of the opinion that doctor's orders are followed more closely if the doctor tells the patient precisely {\it why} such action must be taken.

It is the job of the working mathematician to motivate and communicate foundational ideas in the field. Now, it is not reasonable to start every subject matter from scratch. That is what prerequisites are for. But there has to be a pedagogical utility in a brief review of the thread of ideas and concepts that build to the important definitions of a subject, making the material more accesible to a broader audience and inspiring interest in the topics. I can think of no better way to propagate interest in one's field.

\section{PREPARING THE INGREDIENTS}

\subsection{God's creation}

The most basic standard ingredients one can buy at the mathematical supermarket are sets. Even though it's foundational security is blurry, lying at the intersection of cognitive perception and mathematical reality, like zooming into an electron only to see a smeared probability cloud, set theory is still the {\it de facto} practicing foundation for mathematics, fuzzy though it may be, providing us with our most fundamental ingredients. Sets are comprised of elements. What are elements? You know what elements are. Points are elements of the plane. $7$ is an element of the natural numbers. We are not going to define elements in this article. They are considered a primitive notion, like picking mint from my herb garden. This is where the buck stops. Usually there is an unspoken assumption at this point that we share the same biological hardware and what I'm thinking of when I imagine a set, mirrors whatever it is that your cognition has contrived. Naively, you collect them in practically any way you'd like and you have a set. Although if one is {\it too} cavalier about the whole ordeal, one might end up with a paradox. The most notorious example being the infamous Russell's paradox. This will take us too far afield, however. If sets and elements are the nouns that are hardwired into our brains, then operations are the verbs, the actions, the motions in space and time. We will primarily be concerned with binary operations. That is, operations that take two elements in our set and map them to another element in our set. Usually the collection of elements we care about are numbers and the operations are things like addition and multiplication. We are just going to buy these items from the math market. We also buy axioms in order to have something in which to bake. This brings us to our first definition. Regarding order of operations, we recall the classic mnemonic PEMDAS (parentheses, exponents, multiplication, division, addition, subtraction). 

\begin{Definition}
A {\bf field} is a set $F$ of elements, together with the two binary operations of addition $+$ and multiplication $\cdot$ that satisfy the following axioms. Let $a$, $b$, and $c$ be elements of $F$.

\begin{enumerate}
\item $a+(b+c)=(a+b)+c$  \quad  (Associativity of Addition)
\item $a\cdot (b\cdot c)=(a\cdot b)\cdot c$ \quad   (Associativity of Multiplication)
\item $a+b=b+a$ \quad  (Commutativity of Addition)
\item $a \cdot b = b \cdot a$ \quad   (Commutativity of Multiplication)
\item There exist $0$ in $F$ such that $a+0=a$ for all $a$ in $F$ \quad (Additive Identity)
\item There exist $1$ in $F$ such that $a \cdot 1=a$ for all $a$ in $F$ \quad (Multiplicitive Identity)
\item There exist $-a$ in $F$ such that $a+(-a)=0$ for all $a$ in $F$ \quad (Additive Inverse)
\item There exist $a^{-1}$ in $F$ such that $a \cdot a^{-1}=1$ for all $a$ in $F$ \quad (Multiplicitive Inverse)
\item $(a+b) \cdot c=a \cdot c + b \cdot c$ \quad (Distributive Property)
\end{enumerate}

\end{Definition} 

The first four properties tell us how to combine elements while the distributive property tells us how the two binary operations interact. Like a lot of mathematical definitions, these rules are motivated by and extracted from our basic understanding of arithmetic that we learn in elementary school. These are the core rules that our everyday numbers follow. Now that we have this explicit list of rules, however, we needn't even consider numbers as our elements. Any set of things that satisfy these properties qualify as a field. Our primary field, however, will be the complex numbers. These are the set of numbers $a+bi$ where $a$ and $b$ are real numbers and $i \equiv \sqrt{-1}$. Let $z=a+bi$ and $w=c+di$ be complex numbers. Assuming knowledge of the arithmetic of real numbers, we define addition and multiplication of complex numbers, respectively, as follows. 

\begin{itemize}
\item $z+w=(a+bi)+(c+di)=(a+c)+(b+d)i$
\item $z \cdot w = (a+bi) \cdot (c+di)=(ac-bd)+(ad-bc)i$
\end{itemize}
where the notation $ab$ denotes standard arithmetic multiplication of real numbers $a$ and $b$. Our additive and multiplicative complex identities are the same as the real number identities $0$ and $1$, respectively. 

Now, the astute reader might notice that I've just pulled a fast one on you. Perhaps, as Leopold Kronecker famously stated, `God created the integers', a perspective as good as any but I've not defined real numbers yet! Real numbers have been used at the very least since the time of Pythagoras when, as the story goes, one of his pupils walked the plank for proving that the square root of two is irrational (i.e. not the ratio of two integers). Historical records suggest they were possibly known in India centuries before Pythagoras. Yet it took until the 19th century via the work of Joseph Betrand, Richard Dedekind, and Georg Cantor to formalize a definition accepted and utilized by the mathematical community. Given the strong intuition for real numbers via the continuity of the real number line, we are going to purchase them wholesale and assume the reader cannot taste the difference. The reader interested in baking real numbers from scratch ought to look up Dedekind cuts or Cauchy sequences in a complete metric space.

The reason for using complex numbers instead of real numbers is that, in many ways, they live in the Goldilocks region of numbers. For example, by the fundamental theorem of algebra, any polynomial of degree $n$ can be factored into a product of $n$ linear factors (including multiplicity). Additionally, the ability to take one derivative of a function of a complex variable permits the ability to take {\it any} number of derivatives. One important repercussion of this is that holomorphic functions, which are defined as complex-valued functions which are complex differentiable in a neighborhood of every point in their domain, can be written as a power series! Functions that can be written locally as a power series are known as analytic functions. Thus, holomorphic functions are equivalent to analytic functions. This behavior is not the case with real numbers.

\subsection{RAPID LEAVENING}

Next, we move onto the definition of a vector space. The first exposure to this definition usually is in an undergraduate course in linear algebra. Like the definition of a field, it is just a list of axioms that dictate how the elements in a set and their designated operations behave and interact. A vector space is motivated by the fact that fields don't give the right structure for `coordinates'. For example, what if my elements are {\it pairs} of elements in a field, such as the real numbers? After all, we live in a three-dimensional world so it is natural to think of pairs or triples of real numbers. How do we do arithmetic on such sets? The correct framework lies in the definition of a vector space. Even the name `space' evokes intuitions of the physical space in which we live. However, one must tread with care. Intuition motivates definitions and axioms, which are quite formal objects, (at least partially) insulated from our subconscious conceptions. While we are formally working with a well-defined structure, assumptions based on our intuitions mustn't be allowed to sneak in, opening the door for potential errors in our work. Like in the case of the definition of a field, after a historical trial-and-error, mathematicians came upon the following definition for a vector space as a fruitful object of study that also provides an optimal framework for housing our intuitions for sets of coordinates.

To bake a vector space, we need an underlying field $F$ and a binary operation for vector addition and scalar multiplication. The idea is that one can add two elements in the vector space and multiply by a scalar (think, e.g., a real number) to stretch or compress the vector by a given amount. If you've learned about vectors before, you may use these as an intuition for the definition.

\begin{Definition}
A {\bf vector space} is a set $V$ of elements, together with a binary operation of addition $+$ and multiplication $\cdot$ that satisfy the following axioms. Let $a$ and $b$ be elements of $F$ and let $u$, $v$, and $w$ be elements of $V$. We call elements of $V$ {\bf vectors}.

\begin{enumerate}
\item $u+(v+w)=(u+v)+w$  \quad  (Associativity of Vector Addition)
\item $u+v=v+u$ \quad  (Commutativity of Vector Addition)
\item There exist $0$ in $V$ such that $u+0=u$ for all $u$ in $V$ \quad (Additive Identity)
\item There exist $-u$ in $V$ such that $u+(-u)=0$ for all $u$ in $V$ \quad (Additive Inverse)
\item There exist $1$ in $F$ such that $1 \cdot u=u$ for all $u$ in $V$ \quad (Scalar Identity)
\item $a \cdot (b \cdot u)=(a \cdot b) \cdot u$ \quad (Scalar Associativity)
\item  $(a+b) \cdot u=a \cdot u + b \cdot u$ \quad (Scalar Distributive Property)
\item $a \cdot (u+v)=a \cdot u + a \cdot v$ \quad (Vector Distributive Property)
\end{enumerate}

\end{Definition} 

Before moving on, it should be mentioned that mathematics is often rife with implicit assumptions often missed early on. Unless you were paying close attention, it might not be apparent that we have assumed closure properties in this definition. This means that if I combine two vectors via a binary operation such as addition or multiplication, the result is a vector in my set as well. For example, integers have this property. $4+3=7$, which is also an integer. However, positive integers do not. $3-5=-2$ which is not a positive integer.

A natural visualization of a vector space is given by drawing arrows in the rectangular $xy$-plane that is learned in analytic geometry. One places the origin of the arrow at the origin of the $xy$-plane and the end of the arrow at the coordinate given by the vector. Such visualization must be done with care, though. As an example, this visualization gives an immediate notion of `distance' and even `angle'. But if you look carefully, our axioms for a vector space give no such notion. Just a formal way to do arithmetic on tuples of elements in a field.

Thus, we have a vector space and an underlying field which is usually the real or complex numbers. Our next task will be to stack adjectives onto our definition.  We look for, not only more adjectives to restrict our set under consideration, but also the {\it right} adjectives. For the purposes of composition operators, the set of vector spaces is just too large to get the sort of meaningful results we desire. We need to apply some restrictions to create more structure and better crystallize our studies. We start with a notion of distance by way of a norm which measures the `size' of a vector. Recall that for a complex number $z=a+bi$, its norm is given by $|z|^2=a^2+b^2$.

\begin{Definition}
A {\bf normed vector space} is a vector space $V$ over the field of real or complex numbers equipped with a norm, denoted by $|| \cdot||$, mapping vectors in $V$ to the real numbers, such that for any $u$ and $v$ in $V$ and $c$ in $\mathbb{R}$ or $\mathbb{C}$, we have

\begin{enumerate}
\item $||u||\geq 0$
\item $||u||=0$ \quad \text{if and only if} \quad $u=0$
\item $||c u||=|c| \cdot ||u||$
\item $||u+v|| \leq ||u||+||v||$ \vspace{5mm} \text{Triangle Inequality}
\end{enumerate}

\end{Definition} 

It's important to note that vector spaces do not necessarily have unique norms even though we tend to think of things as having one unique size. We must cast away our preconceived notions of `length' and understand that anything adhering to our set rules is by definition a norm. This will become immensely important when we take the abstract leap and try to measure the size of a function. This `chicken-and-egg' sort of development is an underlying feature of practically all of mathematics. The idea is that mathematicians have a concept in mind that they want to precisely define. They distill the key properties out and experiment with different properties, adding and scaling back, until a good working definition gives itself up. The fact that this process has worked so well is really a miracle.

As one example of the non-uniqueness of norms, take $\mathbb{R}^2$, the set of all pairs of real numbers $(a,b)$ which can be visualized via the $xy$-plane. Now consider the length of a vector from the origin to the point $(a,b)$. The Euclidean norm is given by $\sqrt{a^2+b^2}$. This is the length of a straight line, as the crow flies, from the origin to the point $(a,b)$. However, there are other useful norms as well. For example, consider the norm $|a|+|b|$. This is known as the taxicab norm. The intuition comes from imagining a perfectly gridded city where each block is one unit in length. Next imagine one starts at the origin and aims to arrive at integer coordinates $(a,b)$. Since one cannot (or rather shouldn't!) drive through a building, one is resigned to traveling a length of $|a|+|b|$ units to arrive at one's destination. As an exercise, the reader may try to show that this definition satisfies the properties of a norm.

Thus, now equipped with a normed vector space, we are able to utilize notions of distance. The original notion of a metric space, i.e. an abstract space with a notion of distance, was first brought to awareness due to the development of non-Euclidean geometries forcing the hand of mathematicians to make an effort to study what it means to measure something in these strange new worlds. 

We now have a vector space with added structure providing us with a way of measuring things. However, it turns out this still isn't enough. We need one more ingredient before we start really baking. And this time, we will need a bit of analysis. Now, the underlying objects of our consideration are going to be analytic functions of a complex variable. Analytic here means that the function can be represented as a power series. In calculus one learns that an infinitely-differentiable function about a point can be written as a power series. One of the many magical properties of complex functions is that if you can take one derivative, you get the rest for free, permitting one to express the function as a power series. This is the best deal you'll ever find at the supermarket.  Buy one, get $\infty$! BOGINF! \footnote{I am writing this from Lakeland, FL, hometown of the supermarket Publix, which is quite famous for its Buy One Get One (BOGO) deals.} This venture into the infinite expression of power series means we must work in the universe of analysis where limits reign supreme. 

Remember that closure property we had observed sneaking into our assumptions about vector spaces? It turns out it is important in the study of limits as well. If you've had a course in Analysis or an old-fashioned Calculus course where a bit of rigor still lingered, you might recall the following definition. 

\begin{Definition}
Let $f(x)$ be a function of a real variable that is well-defined on some interval about the point $x=a$, not necessarily including $a$. We say 

\begin{equation*}
\lim_{x \rightarrow a}f(x)=L
\end{equation*}
if for every real number $\epsilon>0$, there exists a real number $\delta>0$ such that $|f(x)-L|<\epsilon$ whenever $0<|x-a|<\delta$.
\end{Definition}

In informal language, this is just a precise way of saying that close points map to close points under the function $f$ near $a$. The same definition applies when you replace $x$ with $z$ and `interval' with a sufficiently small disk. 

Of importance to us is the idea of a limit applied to an infinite sequence of numbers. 

\begin{Definition}
We say a sequence $\{a_1, a_2, \cdots, \}$ converges to a limit $L$ if for every real number $\epsilon>0$, there exists a natural number $N$ such that $|a_n-L|<\epsilon$ whenever $n>N$.
\end{Definition}

Now, one aspect that is lacking here is that our definition assumes we have a guess as to what our limit is. What if we have no idea? Enter the Cauchy sequence.

\begin{Definition}
We say a sequence $\{a_1, a_2, \cdots, \}$ is a Cauchy sequence if for every real number $\epsilon>0$, there exists a natural number $N$ such that for all natural numbers $m,n>N$, we have $|a_n-a_m|<\epsilon$.
\end{Definition}

We thus have the following definition, guaranteeing our limits will stay in our space.
 
\begin{Definition}
We say a normed vector space is complete if every Cauchy sequence of points in the vector space converges to a limit that is also in the vector space. A complete normed vector space is known as a {\bf Banach space}.
\end{Definition}

Such vector spaces were first formalized and popularized by the Polish mathematician Stefan Banach in the early 20th century. He initially referred to them as spaces of type $B$ without mentioning why he chose the letter $B$. His surname comes from the surname of his mother given on his birth certificate who evidently disappeared from record when he was baptized only four days after he was born. His father seems to have sworn secrecy as to the identity of his mother. Nevertheless, her name will live on for centuries to come. Banach's life work in analysis has had a significant impact on analysis. His exposition Th\'eorie des Op\'erations lin\'eaires (written in French)\cite{Banach} \footnote{I had arranged to translate part of this text in order to satisfy the foreign language requirement for my PhD thesis. I procrastinated until my last year in graduate school at Purdue University when the requirement was dropped! Sometimes procrastination pays off. Usually not though.} is considered the founding text for operator theory and functional analysis, the arena in which composition operators find their natural home.

Next, we continue to stack structure onto our space. You'll recall that we observed the fact that norms contain no notion of angle. In order to introduce this, we have to add more structure to our norm. The most productive way of doing this is via the inner product. We define an inner product on a vector space $V$ as a function from the Cartesian product (think pairs) $V \times V \rightarrow \mathbb{C}$ satisfying the following criteria for $\lambda$ in our field and $u, v,w$ in $V$:

\begin{enumerate}
\item $\langle u, u \rangle \geq 0$
\item $\langle u, u \rangle=0$ \quad \text{if and only if} \quad $u=0$
\item $\langle u, v \rangle=\overline{\langle v, u \rangle} $
\item $\langle \lambda u, v \rangle=\lambda \langle u, v \rangle$
\item $\langle u+v, w \rangle=\langle u, w \rangle+\langle v, w \rangle$
\end{enumerate}

It turns out that every inner product space is a normed vector space but the converse is not necessarily true. Thus, for an inner product space we may define the angle between two vectors via the equation $\cos \theta=\frac {\langle u, v \rangle}{||u|| \cdot ||v||}$ which the reader may have seen in multivariate calculus course.

We define a {\bf Hilbert space} to be a Banach space whose norm gives rise to an inner product. Thus every Hilbert space is a Banach space but the converse is not necessarily true. The space upon which our composition operators will act, the Hardy space, is in fact a Hilbert space. Such spaces are named after great early 20th-century mathematician David Hilbert who dabbled in seemingly every corner of the mathematics of his time. He worked influentially in the field of mathematical foundations addressing the recent fall out of a mathematical revolution which we absolutely do not have space to address here. One important aspect of this revolution involved a shifting focus on {\it relations between objects} rather than objects themselves. At the time, it involved understanding that the axioms of a mathematical system need to be disentangled from our assumptions about what they ought to represent. Working in geometry, where the discovery of consistent non-Euclidean geometries led to a plurality of equally true geometric systems, he is quoted as saying

\begin{quote}
``{\it One must be able to say at all times--instead of points, straight lines, and planes--tables, chairs, and beer mugs.}''
\end{quote} 

I bring this up because all of the fuss about these abstract spaces really come to life when applying Hilbert's perspective to what the object in our space can be.

\subsection{The secret sauce}

\begin{quote}
``{\it Empires die, but Euclid’s theorems keep their youth forever.}"
\end{quote} 

The above quote was written by the Italian mathematician Vito Volterra on a postcard in reference to Mussolini's fascist Italy. \footnote{He was one of only 12 Italian professors out of 1,250 who refused to take a mandatory oath of loyalty to Mussolini's regime.} Historical records seem to indicate that he was responsible for an astounding conceptual innovation that hasn't aged a day and will play the key role as our special ingredient. Our special ingredient is not actually going to be a particular ingredient but rather the way in which we conduct our process, a concept that is well-known to bakers. It is the reason we spent so much time building abstract definitions in the last section. It will be the fire that breathes life into our subject matter, the composition operator.  We are going to endow our spaces with meaning that, I think, is a key player in the way humans conduct creative acts. 

You'll recall that for a Banach space, we need elements and a norm as foundational ingredients. Now we are accustomed to our elements being numbers and our norm being the classical distance between two points in Euclidean space given by the Pythagorean theorem. But our definition for our abstract spaces do not demand these interpretations. As Hilbert said, why not replace points and planes with tables and beer mugs! Rather than populating our normed vector space with classical vectors represented as arrows emanating from the origin to the ordered pair in the standard $xy$-plane, our vectors are going to be....functions.

Quite miraculously, as long as one is sufficiently careful, this approach works wonderfully! The idea that functions can be viewed as points \footnote{Vectors really, but when we do calculus, we routinely say points when we mean vectors.} in a strange space of infinite dimensions is very different from our standard notion of space and is the big idea Volterra stumbled upon in the late 1800's. We normally do not think much about the interior structure of a point and leave its interpretation vague. However, what riches can be attained when making the introspection! We are going to focus on one specific interpretation that is especially accomodating to our goals. In this interpretation, the points are not just any functions, but, chew slowly as it's a mouthful, rather functions of a complex variable that are analytic in the open unit disk which is defined to be the set of all points in the complex plane whose distance from the origin is strictly less than one. The unit disk turns out to be a powerful domain to work in, not just because of its symmetry, but also due to the celebrated Riemann Mapping Theorem which states that an open connected proper subset of the complex plane is conformal to the unit disk where conformal essentially means angle and orientation preserving. 

Now, recall that a very special feature of analytic functions is that they admit a power series. That is, given an analytic function $f$, one may write it as 

\begin{equation}
f(z)=\sum_{k=0}^{\infty}a_k z^k
\end{equation}
for some complex coefficients $\{a_k\}_0^{\infty}$. Now, you might be wondering, how on Earth do you find the length of a vector represented by a function? For the Hardy space, we use a norm that resembles the Pythagorean theorem. It turns out that for a Hilbert space, inner products are related to norms via the formula $\langle u, u \rangle=||u||^2$. We define a norm on the Hardy space $H^2(\mathbb{D})$ via 

\begin{equation}
||f||_{H^2(\mathbb{D})}^2=\sum_{k=0}^{\infty}|a_k|^2.
\end{equation}

If $f$ has finite norm, that is $||f||_{H^2(\mathbb{D})}^2<\infty$, we define it to be in the Hardy space. Additionally, it follows that our inner product for two functions $f(z)=\sum_{k=0}^{\infty}a_k z^k$ and $g(z)=\sum_{k=0}^{\infty}b_k z^k$ analytic in the unit disk  is given by 

\begin{equation}
\langle f, g \rangle_{H^2(\mathbb{D})}^2=\sum_{k=0}^{\infty}a_k \overline{b_k}.
\end{equation}

We will omit the subscript on the inner product when understood from context. Thus, we have our definition. It turns out that this definition is equivalent to the one given in the introduction if one sets $p=2$. 

\begin{proof}
\begin{align}
||f||_{H^2(\mathbb{D})}^2&=\sup_{0<r<1} \int_0^{2 \pi}|f(re^i \theta)|^2 \frac{d\theta}{2 \pi}\\
&=\sup_{0<r<1} \int_0^{2 \pi}\left(\sum_{j=0}^{\infty}a_jr^je^{ij\theta}\right)\left(\sum_{k=0}^{\infty}\overline{a_k}r^ke^{-ik\theta}\right)\frac{d\theta}{2 \pi}\\
&=\sup_{0<r<1} \int_0^{2 \pi}\sum_{j=0}^{\infty}\sum_{k=0}^{\infty}a_j\overline{a_k}r^{j+k}e^{i(j-k)\theta}\frac{d\theta}{2 \pi}\\
&\sup_{0<r<1}\sum_{j=0}^{\infty}\sum_{k=0}^{\infty}a_j\overline{a_k}r^{j+k}\int_0^{2 \pi}e^{i(j-k)\theta}\frac{d\theta}{2 \pi}\\
&=\sum_{k=0}^{\infty}a_k\overline{a_k}r^{2k}=\sum_{k=0}^{\infty} |a_k|^2
\end{align}
\end{proof}

It turns out that the Hardy space is a concrete (and fruitful!) example of a Hilbert space. 

Thus, to recap, we have a a space defined by special features that endow it with arithmetic operations and a notion of length for each element. We then made the conceptual leap of substituting functions in for our elements, and it works! Doing analysis in such a domain gives rise to the field of functional analysis.

Furthermore, the Hardy space is an example of a special type of Hilbert space known as a reproducing kernel Hilbert space (RKHS). 

\begin{Definition}
We say a Hilbert space is a reproducing kernel Hilbert space (RKHS) if the following holds. Suppose we have a Hilbert space $H$ of real or complex-valued functions equipped with pointwise addition and scalar multiplication and a set $S$ (often the unit disk). We define the evaluation functional $L_z$ on $H$ as the map $L_z : f \rightarrow f(z)$ for all $f \in H$. If this evaluation functional is continuous for every value $z$ in $S$, we say that $H$ is a reproducing kernel Hilbert space.
\end{Definition}

Now, and this is important, there is a very powerful theorem known as the Riesz representation theorem that says the following. For each RKHS there is a special unique and privileged function $k_z$ in $H$ called the kernel function that completely determines the RKHS where for every function $f$ in the Hilbert space $H$, we have

\begin{equation}
f(z)=\langle f, k_z \rangle.
\end{equation}

Now this is a rather abstract concept so it would serve us well to take a moment and explicitly compute the kernel for our Hardy space.

\begin{Theorem}
The kernel function for the Hardy space $H$ is given by 
\begin{equation}
k_w(z)=\frac{1}{1-\overline{w}z}.
\end{equation}
\end{Theorem}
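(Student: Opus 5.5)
The plan is to guess the kernel from the power-series description of the inner product, then check that the guess reproduces point values. Fix $w$ in the unit disk $\mathbb{D}$. We want a function $k_w$ in the Hardy space with $f(w)=\langle f,k_w\rangle$ for every $f$ (this is the reproducing identity with the roles of $z$ and $w$ as in the statement). Write $f(z)=\sum_{k=0}^\infty a_k z^k$. Then
\begin{equation*}
f(w)=\sum_{k=0}^\infty a_k w^k .
\end{equation*}
By the formula for the inner product, $\langle f,g\rangle=\sum_k a_k\overline{b_k}$ when $g(z)=\sum_k b_k z^k$. Comparing the two sums term by term suggests choosing $\overline{b_k}=w^k$, that is, $b_k=\overline{w}^k$. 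So the candidate is
\begin{equation*}
k_w(z)=\sum_{k=0}^\infty \overline{w}^k z^k=\sum_{k=0}^\infty(\overline{w}z)^k=\frac{1}{1-\overline{w}z},
\end{equation*}
where the last step is the geometric series. It is valid because $|\overline{w}z|<1$ for $z\in\mathbb{D}$, which also shows $k_w$ is analytic in $\mathbb{D}$.

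The steps, in order, are as follows. First, check that $k_w$ actually belongs to $H^2(\mathbb{D})$: its norm squared is $\sum_k |w|^{2k}=1/(1-|w|^2)<\infty$, since $|w|<1$. Second, compute $\langle f,k_w\rangle=\sum_k a_k\overline{\overline{w}^k}=\sum_k a_k w^k=f(w)$. Third, invoke the uniqueness part of the Riesz representation theorem as quoted in the paper to conclude that this function is \emph{the} kernel. Alternatively, uniqueness can be seen directly: if two functions $k,k'$ both reproduce, then $\langle f,k-k'\rangle=0$ for all $f$. Taking $f=k-k'$ forces $k=k'$ by the positivity axiom of the inner product.

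The main obstacle is the second step: the series $\sum_k a_k w^k$ must converge absolutely, so that the inner product and point evaluation agree as genuine convergent sums. I would handle this with the Cauchy--Schwarz inequality for sequences,
\begin{equation*}
\sum_k |a_k||w|^k\le\Big(\sum_k|a_k|^2\Big)^{1/2}\Big(\sum_k|w|^{2k}\Big)^{1/2}=\frac{\|f\|_{H^2(\mathbb{D})}}{\sqrt{1-|w|^2}}.
\end{equation*}
This bound is finite for $f$ in the Hardy space. As a side benefit, it shows the evaluation functional is bounded, hence continuous, so the Hardy space really is an RKHS and the Riesz representation theorem applies in the first place.
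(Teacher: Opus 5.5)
Your proposal is correct and follows the paper's route: you expand $k_w$ as the geometric series $\sum_k \overline{w}^k z^k$ and read off $\langle f,k_w\rangle=\sum_k a_k w^k=f(w)$ from the coefficient formula for the inner product. Your extra checks are all sound, and the paper omits them: that $\|k_w\|^2=1/(1-|w|^2)<\infty$, the Cauchy--Schwarz bound giving absolute convergence and continuity of point evaluation, and the direct uniqueness argument.
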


\begin{proof}
Recalling the definition of our inner product in the Hardy space, we must demonstrate that it satisfies the reproducing property. Suppose $f(z)=\sum_{k=0}^{\infty}a_k z^k$ and recall that for $\overline{w}z<1$ the geometric series gives us $\frac{1}{1-\overline{w}z}=\sum_{k=0}^{\infty}\left(\overline{w}z\right)^k$. We then have 
\begin{equation}
\langle f, k_w \rangle=\left\langle \sum_{k=0}^{\infty}a_k z^k, \sum_{k=0}^{\infty}\overline{w}^kz^k \right\rangle= \sum_{k=0}^{\infty}a_k w^k=f(w)
\end{equation}
as desired.
\end{proof}

These kernel functions play a critical role in the study of RKHS's for a variety of reasons that would take us too far afield in the present moment. Let's proceed to the seasoning before arriving at our destination.

\subsection{A dash of infinite-dimensional linear algebra}

Vector spaces are a natural playground for the subject of linear algebra. Studying finite dimensional versions in depth are part of every math departments core curriculum. The applications in STEM are endless. When asked what topic a given student should learn more deeply, linear algebra is always my go-to answer. It just shows up everywhere in massive quantities. It's been said that all of machine learning is just glorified linear algebra.\footnote{One ought to be careful here, however. While all the math behind AI is assumed linear from the outset, the neural networks in our brains are, in fact, {\it not} linear in nature, a substantial deviation from machines.} Now, for essentially every concept in finite vector spaces, there is an analog in infinite-dimensional vector spaces. Our focus will be specifically on the Hardy space although other vector spaces in functional analysis are similar in nature. 

Let's begin by discussing what our basis is. Recall that a basis for a vector space is a linearly independent spanning subset, meaning a minimal set of elements in the vector space such that any element in the vector space can be written as a linear combination of those elements. While there are all sorts of options for choosing your basis, the number of basis elements, known as the dimension, remains invariant. For the Hardy space, it turns out that there is a natural way of identifying this set. Since analytic functions are guaranteed a power series expansion, we take the set of monomials $\{1, z, z^2,...\}$ as our basis. Furthermore, using our definition above, a brief calculation shows that this basis is orthonormal meaning $\langle z^m, z^n \rangle=\delta_{mn}$ where $\delta_{mn}$ is the Kronecker delta function, equaling $0$ if $m \neq n$ and $1$ if $m=n$. Thus a vector in the Hardy space representing the `point' $f(z)=\sum_{k=0}^{\infty}a_kz^k$ can be written as $(a_0, a_1, a_2,...)^T$ where $T$ denotes the transpose. We call infinite-dimensional matrices acting on these infinite-dimensional vectors {\it operators}. The study of these objects is called operator theory. Think infinite-dimensional linear algebra.

In finite-dimensional linear algebra, every linear transformation can be represented by a matrix relative to some basis. Experience has shown with great emphasis that certain features of a given matrix are of particular importance. These are things like the norm of a matrix, its eigenvalues and eigenvectors, it's inverse if it exists, its adjoint, the determinant, and many others. Each of these has an infinite-dimensional counterpart that does not always exactly coincide withe the finite-dimensional definitions.

\section{PUT IT IN THE OVEN ALREADY!}

Now we finally arrive at the point where we can define our composition operators in the rich context they deserve. Once all the hard work is done and you put it in the oven, you just watch it bake. With the appropriate preparation now complete and without further ado, we present the composition operator.

\begin{Definition}
Let $H$ be a RKHS and $f \in H$. A composition operator $C_{\phi}$ induced by symbol $\phi$ is the operator defined by 

$$C_{\phi}f=f \circ \phi.$$

\end{Definition} 

That's it. Upon first glance, it looks like something learned in introductory algebra, yet the context given is rich and it continues to bear fruit. Note that for a given composition operator, the map $\phi$ is fixed while the vectors $f$ vary. It's also important to note that $\phi$ must be a self-map of the unit disk in order for the composition operator to be well-defined. 

Note that composition operators are linear since, for a scalar $c$, we have

\begin{align*}
    C_{\phi}(f+g) &=(f+g) \circ \phi=f\circ \phi+g\circ \phi =C_{\phi}f+C_{\phi}g \\
    C_{\phi}(cf) &= cf \circ \phi=cC_{\phi}f 
\end{align*}

The fact that $C_{\phi}$ is linear means that it has an (infinite) matrix representation. To observe this in action, let's take $\phi(z)=z^2$ and determine its matrix representation. This is a self-map of the unit disk as $|z|<1$ certainly implies $|z|^2<1$. Hence for $f(z)=\sum_{k=0}^{\infty}a_kz^k$ we have the following:

\begin{equation}
C_{z^2}f (z)=f (z^2)=\sum_{k=0}^{\infty}a_k(z^2)^k=a_0+a_1z^2+a_2z^4+a_3z^6+\cdots
\end{equation}
and hence, written in the standard basis given by $\{1, z, z^2, ...\}$, we have

\begin{equation}
C_{z^2}f=
\begin{pmatrix}
1 & 0 & 0 & 0 & \cdots \\
0 & 0 & 0 & 0 & \cdots \\
0 & 1 & 0 & 0 & \cdots \\
0 & 0 & 0 & 0 & \cdots \\
0 & 0 & 1 & 0 & \cdots \\
\vdots & \vdots & \vdots & \vdots & \ddots \\
\end{pmatrix}
\begin{pmatrix}
a_0 \\ a_1 \\ a_2 \\ a_3 \\ a_4 \\  \vdots
\end{pmatrix}=
\begin{pmatrix}
a_0 \\ 0 \\ a_1 \\ 0 \\ a_2 \\ \vdots
\end{pmatrix}
\end{equation}

Now usually matrix representations for composition operators are too complex to be useful and more function theoretic approaches are taken. Nonetheless, it is instructive to understand composition operators as analogs of matrices in our infinite-dimensional space. It's also a prompt for us that we can ask the same sort of questions that one asks in linear algebra.

\section{A NEW WORLD TO EXPLORE}

Finally, with composition operators in hand, we briefly review a few fun highlights, underlining the connection with finite-dimensional linear algebra concepts. Context matters as well. Results vary contingent upon which vector space is used with the Hardy space being, arguably, the most celebrated.

The first order of business that finite-dimensional bakers need not worry about is that of boundedness. The idea is that we want our operator to map bounded sets to bounded sets. This comes for free for matrices in finite dimensions. We define boundedness in our context.

\begin{Definition}
An operator $T$ acting on a RKHS $H$ is call {\bf bounded} if there exists an $M>0$ such that for all $f$ in $H$, 
\begin{equation}
||Tf||_{H} \leq M ||f||_{H}
\end{equation}
\end{Definition}

The subscript $H$, which is often omitted, is there to remind us that this is the norm given on the RKHS. In our case, the Hardy space, it is $||f||_{H^2(\mathbb{D})}=\sum_{k=0}^{\infty}|a_k|^2$. This also leads to a way to find the `size' of an operator. Just as one may define a norm on a matrix (recall there are numerous definitions), we define the norm of an operator as follows.

\begin{Definition}
The norm of a bounded operator $T$ acting on a RKHS $H$ is given by 
\begin{equation}
||T||_{{\it op}} =\sup_{||f||_{H} \leq 1}||Tf||_{H}
\end{equation}
\end{Definition}

Notice that there are a lot of different norms floating around. Be careful. Additionally and perhaps surprisingly, it turns out that the boundedness of an operator is equivalent to continuity! 

One of the most important results in the theory of composition operators was actually proven over $100$ years ago in 1925 by the English mathematician J.L. Littlewood, a close collaborator of G.H. Hardy for whom our space is named. It is called Littlewood's subordination principle. Using the integral definition of the Hardy space, Littlewood showed that

\begin{equation}
\int_0^{2\pi} |f(\phi(re^{i\theta}))|^2 d\theta \leq \int_0^{2\pi} |f(re^{i\theta})|^2 d\theta
\end{equation}
when $\phi$ is an analytic self-map of the unit disk. If one takes a good hard look at that, it says something rather profound. Every analytic self-map of the unit disk induces a bounded composition operator on the Hardy space $H^2(\mathbb{D})$.\footnote{Littlewood actually proved it for $H^p(\mathbb{D})$ for $1 \leq p <\infty$.}

Addressing the question of eigenvalues actually marks the start of the study of composition operators. Dating all the way back to 1871, Ernst Schr\"{o}der studied the functional equation $f(\phi(z))=\lambda f(z)$. In our context, we may interpret it as the eigenvalue equation $C_{\phi}f=\lambda f$ which is just a infinite-dimensional relabeling of the formula $Ax=\lambda x$. In such a case, it is not sufficient to find the formal solution. One must ensure such solutions are in the appropriate space as well. 

Now for a linear map in finite dimensions, solving $Ax=\lambda x$ is equivalent to determining when $A-\lambda I$ fails to be invertible, where $I$ is the identity matrix. This does not carry into infinite-dimensions. One may construct scenarios where $A-\lambda I$ is not invertible yet $\lambda$ is not an eigenvalue.

\begin{Definition}
Given an operator $T$ acting on a RKHS $H$ , the {\bf spectrum} is defined to be 
\begin{equation}
\sigma(T)=\{ \lambda \in \mathbb{C} \mid T-\lambda I \hspace{2mm} \text{is not invertible} \}
\end{equation}
\end{Definition}

The set of eigenvalues is known as the point spectrum, a subset of the spectrum. Determining the spectrum of a composition operator is no small task and is highly dependent on the underlying map $\phi$. There is also a beautiful and intricate relationship between the spectrum and the fixed-point behaviour of $\phi$ but that would take us too far afield. The author encourages the reader to explore further through her own investigations or researching the literature.

To find the inverse of a composition operator, one must ensure that it exists. In such a case, the map $\phi$ must be bijective, that is, one-to-one and onto. It turns out that these are precisely the class of maps known as the automorphisms of the unit disk and can be written in the form

\begin{equation}
\phi(z)=e^{i \theta} \frac{a-z}{1-\overline{a}z}
\end{equation}
where $|a|<1$ and $\theta$ is a real number. In such a case, one may use the definition of composition operator to verify that $(C_{\phi} \circ C_{\phi^{-1}})(f)=(C_{\phi^{-1}} \circ C_{\phi})(f)=f$ from which it follows that $(C_{\phi})^{-1}=C_{\phi^{-1}}$.

Finally, recall the adjoint of a matrix $A$ is defined to be the matrix $A^*$ such that $\langle Ax, y \rangle = \langle x, A^*y \rangle$. The same definition applies in infinite dimensions. As expected, the adjoint depends highly on the nature of the underlying map $\phi$.  One beautiful identity is immediate. Recalling our kernel functions $k_w$ and their special reproducing property, we have

\begin{equation}
 \langle f, C_{\phi}^* k_w \rangle=\langle C_{\phi}f, k_w \rangle = \langle f \circ \phi, k_w \rangle= f(\phi(w))=\langle f, k_{\phi(w)} \rangle
\end{equation}
and hence we attain the wonderful little formula that could fit in your pocket:

\begin{equation}
C_{\phi}^*(k_w)=k_{\phi(w)}.
\end{equation}

Similar to composition operators acting on the Hardy space, we also have multiplication operators, known as Toeplitz operators. These are defined by $T_{\phi}(f)=P(\phi f)$ where $P$ denotes the projection orthogonally onto $H^2(\mathbb{D})$. You might be thinking it should just be $T_{\phi}(f)=\phi f$. However, multiplying the function $f$ in $H^2(\mathbb{D})$ by $\phi$ might knock it out of $H^2(\mathbb{D})$, hence we project back onto our space. In practice, this just means writing out the infinite series of the product at eliminating the terms with negative Fourier coefficients, i.e. terms that look like $a_nz^n$ where $n$ is a negative integer.

As a final glaze on our product before we see ourselves out, there is a wonderful adjoint formula due to Cowen \cite{C} for when $\phi$ is a linear fractional map. defined by 

\begin{equation}
f(z)=\frac{az+b}{cz+d}
\end{equation}
where the coefficients $a$, $b$, $c$, and $d$ are complex numbers such that $ad-bc \neq 0$ (otherwise $f(z)$ is a constant function). We have the following proof in the same manner it was first given by Cowen in 1988.

\begin{Theorem}
Let 
\begin{equation}
\phi(z)=\frac{az+b}{cz+d}
\end{equation}

be a linear fractional self-map of the unit disk. Then the adjoint $C_{\phi}^*=T_gC_{\sigma}T_h^*$ where $g(z)=\frac{1}{-\overline{b}z+\overline{d}}$, $\sigma(z)=\frac{\overline{a}z-\overline{c}}{-\overline{b}z+\overline{d}}$ and $h(z)=cz+d$.
\end{Theorem}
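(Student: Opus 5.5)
The plan is to test the claimed identity on the reproducing kernels $k_w$. Since $\langle f, k_w\rangle = f(w)$, only $f=0$ is orthogonal to every $k_w$, so finite linear combinations of kernels are dense in $H^2(\mathbb{D})$. Both $C_\phi^*$ and $T_gC_\sigma T_h^*$ are bounded. For $C_\phi$ this is Littlewood's subordination principle. For the right-hand side, $g$ and $h$ are bounded analytic functions on the disk, and $\sigma$ is a self-map of the disk (checked below). Hence it suffices to show that both operators agree on every $k_w$. By the pocket formula already derived, $C_\phi^* k_w = k_{\phi(w)}$, so the goal is to prove
\begin{equation*}
T_g C_\sigma T_h^* k_w = k_{\phi(w)} \quad \text{for all } w\in\mathbb{D}.
\end{equation*}

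First I will compute $T_h^* k_w$. Since $h$ is analytic and bounded, $T_h f = P(hf) = hf$ is just multiplication. The same reproducing-kernel trick used for $C_\phi^*$ then gives
\begin{equation*}
\langle f, T_h^* k_w\rangle = \langle hf, k_w\rangle = h(w)f(w) = \langle f, \overline{h(w)}k_w\rangle,
\end{equation*}
so $T_h^*k_w = \overline{h(w)}\,k_w = (\overline{c}\,\overline{w}+\overline{d})k_w$. Next, applying $C_\sigma$ gives $\overline{h(w)}\,k_w(\sigma(z)) = \overline{h(w)}/(1-\overline{w}\sigma(z))$. Finally, $T_g$ is again multiplication by the analytic function $g$, which yields
\begin{equation*}
T_gC_\sigma T_h^*k_w(z) = \frac{\overline{c}\,\overline{w}+\overline{d}}{(-\overline{b}z+\overline{d})\bigl(1-\overline{w}\sigma(z)\bigr)} .
\end{equation*}
Clearing the denominator of $\sigma$, the bottom becomes $-\overline{b}z+\overline{d} - \overline{w}(\overline{a}z-\overline{c})$, which equals $(\overline{c}\,\overline{w}+\overline{d}) - (\overline{a}\,\overline{w}+\overline{b})z$. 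Dividing numerator and denominator by $\overline{c}\,\overline{w}+\overline{d}$ gives $1/(1-\overline{\phi(w)}z) = k_{\phi(w)}(z)$, as desired.

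The main obstacle is justifying that the right-hand side is a legitimate bounded operator, that is, that $g$ and $h$ lie in $H^\infty$ and $\sigma$ maps $\mathbb{D}$ into $\mathbb{D}$. Boundedness of $h$ is trivial. For $g$ I need $|d|>|b|$. This follows because $\phi(\mathbb{D})\subset\mathbb{D}$ with $\phi$ nonconstant forces $d\neq 0$ and $|b/d|=|\phi(0)|<1$. Then $-\overline{b}z+\overline{d}$ does not vanish on the closed disk, which also makes $\sigma$ analytic there.

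For $\sigma$ being a self-map, I would note that $\sigma(z) = \overline{\phi^{-1}(\overline{z}^{\,-1})}^{\,-1}$ in Möbius terms. More concretely, the coefficient matrix of $\sigma$ is the adjoint-inverse pattern of that of $\phi$ up to scaling. Then $\phi(\mathbb{D})\subset\mathbb{D}$ is equivalent to $\phi^{-1}$ mapping the exterior of the disk into the exterior of the disk, and reflecting through the unit circle by $z\mapsto 1/\overline{z}$ converts this into $\sigma(\mathbb{D})\subset\mathbb{D}$. I expect this reflection argument to need the most care, including possibly treating $c=0$ and the point at infinity separately. If it resists, I would instead use Littlewood only for the left side and verify the identity of adjoints weakly, via $\langle k_v, T_gC_\sigma T_h^*k_w\rangle$, for a dense set. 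Even then, some boundedness of $C_\sigma$ is needed to pass from the dense set to all of $H^2(\mathbb{D})$, so the self-map property remains the key point.
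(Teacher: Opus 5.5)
Your proposal is correct and follows the paper's proof: reduce to the kernels $k_w$ by density, use $C_\phi^*k_w=k_{\phi(w)}$ and $T_h^*k_w=\overline{h(w)}\,k_w$, and compute, while also justifying what the paper only asserts (boundedness of both sides, $|b|<|d|$ so that $g\in H^\infty$, and $\sigma(\mathbb{D})\subset\mathbb{D}$ via the correct identity $\sigma(z)=1/\overline{\phi^{-1}(1/\overline{z})}$). One side remark is inaccurate but harmless: the coefficient matrix of $\sigma$ is not the adjoint-inverse of $M=\begin{pmatrix} a&b\\ c&d\end{pmatrix}$, which would give $(\overline{d}z-\overline{c})/(-\overline{b}z+\overline{a})$, but rather $JM^*J$ with $J=\begin{pmatrix}1&0\\0&-1\end{pmatrix}$; your reflection argument does not rely on that remark.
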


\begin{proof}
It turns out that the kernel functions are dense in the Hardy space. What this means for us by some technical reasons is that it is sufficient to prove the theorem when $f(z)=k_w(z)$. Thus, what we are really looking for is $C_{\phi}^*k_w=k_{\phi(w)}$. In the same way we showed this formula, a small calcuation also shows that $T_h^*k_w=\overline{h(w)}k_w$ when $h$ is a multiplier (meaning $hf$ is in the Hardy space for all $f$ in the Hardy space). Our $g$ and $h$ can be shown to be multipliers. Additionally, one can show that $\sigma$ is a self-map of the unit disk. Hence the formula is well-defined. Then a calculation shows

\begin{align}
T_gC_{\sigma}T_h^*(k_w)(z)&=\overline{h(w)}T_gC_{\sigma}(k_w)(z)\\
&=\overline{(cw+d)}\left(\frac{1}{-\overline{b}z+\overline{d}}\right)\left(\frac{1}{1-\overline{w}}\frac{\overline{a}z-\overline{c}}{-\overline{b}z+\overline{d}}\right)\\
&=\frac{\overline{cw}+\overline{d}}{-\overline{b}z+\overline{d}-\overline{wa}z+\overline{wc}}\\
&=\frac{1}{1-\overline{\phi(w)}z}=k_{\phi(w)}=C_{\phi}^*(k_w)(z).
\end{align}

\end{proof}

While there is so much more to say, alas, our time together must draw to a close. We have already meandered farther than we were supposed to. It is hoped that the reader takes this tasty introduction to the idea behind a composition operator as the starting line to a grand adventure filled with inquiry and discovery. There are many interesting operators and currently a flurry of activity is transpiring in the theory of such operators acting on a generalized Hardy space known as the Drury-Arveson space, where the underlying vectors are functions of several complex variables. Much remains to be explored. Let's watch what the future brings. Better yet, let's engage and aim to play a role in its development. Until next time.

\end{document}